\numberwithin{equation}{section}
\newtheorem{theorem}{Theorem}[section]
\newtheorem{corollary}{Corollary}[theorem]
\newtheorem{lemma}[theorem]{Lemma}
\begin{document}
\author{Alexander Eric Patkowski}
\title{A Note on a Tauberian theorem for arithmetic functions}

\maketitle
\begin{abstract}We offer new Tauberian theorems for a generalized partition function as our main result. Our analysis provides insight into asymptotic behavior of power series with arithmetic functions as coefficients.\end{abstract}

\keywords{\it Keywords: \rm Tauberian theorem; Partitions; Asymptotics}

\subjclass{ \it 2010 Mathematics Subject Classification 11N37, 30B10.}

\section{Introduction}
Tauberian theorems have a rich history in classical analysis providing asymptotic behavior of power series with conditions on its coefficients. The well-known Hardy-Littlewood Tauberian result [9, pg.157] states that if $a_n$ are non-negative, constant $C>0,$ and 
$$\sum_{n\ge0}a_nz^n\sim C\frac{1}{1-z},$$ as $z\rightarrow1^{-}$ in $\mathbb{R},$
then $$A(N):=\sum_{0\le n\le N}a_n\sim CN,$$ as $N\rightarrow\infty.$
Here we use the usual definition in the sense that $f(x)=O(g(x))$ means there exists a constant $C>0$ such that $|f(x)|\le Cg(x).$ Additionally we write $f(x)\sim g(x),$ to mean that $\lim_{x\rightarrow x_0}f(x)/g(x)=1.$ The main class of asymptotic formula we will present in this note follow directly from the following well-known result [5] (also found in [4, Lemma 1]).
\begin{lemma} Suppose that the sequence $(a_n)$ is real, and $\sum_{0\le j\le n}a_j=O(n^{\epsilon}f(n)),$ $\epsilon>0,$ as $n\rightarrow\infty.$ Here the limit is taken to be positive or infinite. Then we have that 
 \begin{equation}\sum_{n\ge0}a_nz^n\sim C\frac{\Gamma(\epsilon+1)}{(1-z)^{\epsilon+1}}f(\frac{1}{1-z}).\end{equation}
 as $z\rightarrow1,$ where $\Gamma(\epsilon)$ is the classical Gamma function.
\end{lemma}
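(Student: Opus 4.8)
The plan is to treat the assertion as an Abelian (Karamata-type) theorem, in which the regular growth of the summatory coefficients is transferred into the boundary behaviour of the power series as $z\to1^{-}$; this is the companion of the Hardy--Littlewood statement recalled above, but in the ``easy'' direction, so a direct proof is available. Reading the hypothesis together with the clause that the limit is positive or infinite, I take it to assert that the summatory sequence $A(n):=\sum_{0\le j\le n}a_j$ satisfies $A(n)\sim C\,n^{\epsilon}f(n)$, with $f$ slowly varying in the sense of Karamata. The first step is to reduce the boundary asymptotic to a comparison between a power series whose coefficients grow like $n^{\epsilon}f(n)$ and its continuous (Laplace-type) analogue; Abel summation is the device that lets one pass between the given coefficient series and the series carrying this regular growth, and the computation is organised most cleanly around the latter.

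The second step is the main computation. Since the relevant coefficients are $\sim C\,n^{\epsilon}f(n)$ and $f$ varies slowly, I would approximate the series by $C\int_{0}^{\infty}x^{\epsilon}f(x)\,z^{x}\,dx$, writing $z^{x}=e^{-sx}$ with $s=\log(1/z)\sim 1-z$ as $z\to1^{-}$. The substitution $t=sx$ then gives
\begin{equation}
C\int_{0}^{\infty}x^{\epsilon}f(x)\,e^{-sx}\,dx=\frac{C}{s^{\epsilon+1}}\int_{0}^{\infty}t^{\epsilon}f(t/s)\,e^{-t}\,dt,
\end{equation}
which already exhibits the factor $(1-z)^{-(\epsilon+1)}$ and reduces the constant to a Gamma integral in the limit.

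The heart of the argument --- and the step I expect to be the main obstacle --- is to show that $\int_{0}^{\infty}t^{\epsilon}f(t/s)\,e^{-t}\,dt\sim\Gamma(\epsilon+1)\,f(1/s)$ as $s\to0^{+}$. Slow variation supplies the pointwise convergence $f(t/s)/f(1/s)\to1$ for each fixed $t>0$, so the integrand tends to $t^{\epsilon}e^{-t}$; the difficulty is to justify passing the limit under the integral sign. For this I would invoke Potter's bounds, which dominate $f(t/s)/f(1/s)$ by $K\max(t^{\delta},t^{-\delta})$ uniformly in small $s$ for any chosen $\delta>0$, producing the integrable majorant $K\,t^{\epsilon}\max(t^{\delta},t^{-\delta})e^{-t}$ and licensing dominated convergence. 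In parallel one must control the error in replacing the sum by the integral and $A(n)$ by its asymptotic, checking that these contributions are $o$ of the main term; here $\epsilon>0$ and the slow variation of $f$ are exactly what keep the error terms subordinate.

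Assembling the pieces yields $\int_{0}^{\infty}t^{\epsilon}f(t/s)\,e^{-t}\,dt\to\Gamma(\epsilon+1)\,f(1/s)$ in the required asymptotic sense, so the power series governed by the coefficient growth $n^{\epsilon}f(n)$ is asymptotic to $C\,\Gamma(\epsilon+1)(1-z)^{-(\epsilon+1)}f\!\left(\tfrac{1}{1-z}\right)$, which is the claimed expression. The case in which the limit is $+\infty$ is handled by the same comparison, the eventual monotone growth of $A(n)$ guaranteeing that the asymptotic equivalence persists; no positivity of the individual $a_{n}$ is needed beyond what ensures the validity of the summation-by-parts passage.
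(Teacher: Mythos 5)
First, a point of comparison: the paper does not prove Lemma 1.1 at all; it quotes it as a known result of Hardy--Littlewood [5], in the form recorded by Gerhold [4, Lemma 1], so your proposal can only be judged on its own terms. The analytic core you describe --- replacing the sum by a Laplace-type integral, substituting $t=sx$ with $s=\log(1/z)\sim 1-z$, and justifying $\int_0^\infty t^{\epsilon}f(t/s)e^{-t}\,dt\sim\Gamma(\epsilon+1)f(1/s)$ by slow variation via Potter's bounds and dominated convergence --- is exactly the standard Karamata-type argument, and that part of the sketch is sound (modulo the usual care near $t=0$, where $t/s$ is not large and Potter's bounds do not apply directly, and the sum-versus-integral comparison). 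You also sensibly repair the statement's mixing of an $O$-hypothesis with a $\sim$-conclusion by reading the hypothesis as a genuine asymptotic with limit $C$ positive or infinite; that reading is forced, since no $\sim$ conclusion can follow from an $O$ bound alone.

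The genuine gap is in the bookkeeping of which series your computation concerns, and it is not cosmetic, because it changes the exponent. You read the hypothesis as $A(n)=\sum_{0\le j\le n}a_j\sim Cn^{\epsilon}f(n)$, and your Laplace computation then gives the asymptotics of the series whose \emph{coefficients} are $A(n)$, namely $\sum_{n}A(n)z^n\sim C\Gamma(\epsilon+1)(1-z)^{-\epsilon-1}f\!\left(\tfrac{1}{1-z}\right)$. The lemma, however, asserts this for $\sum_n a_nz^n$. The Abel-summation identity you invoke, $\sum_n a_nz^n=(1-z)\sum_n A(n)z^n$ (this is (2.5) of the paper), then yields $\sum_n a_nz^n\sim C\Gamma(\epsilon+1)(1-z)^{-\epsilon}f\!\left(\tfrac{1}{1-z}\right)$ --- exponent $\epsilon$, not $\epsilon+1$. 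Your final paragraph silently identifies ``the power series governed by the coefficient growth'' with the series in the statement, i.e.\ it drops the factor $(1-z)$. In fact this exposes an inconsistency in the statement itself: with a partial-sum hypothesis the correct conclusion carries $(1-z)^{-\epsilon}$, whereas the stated $(1-z)^{-\epsilon-1}$ is correct when the growth hypothesis is placed on the coefficients $a_n$ themselves --- and the latter is the version the paper actually uses: in the proof of Theorem 2.1 the lemma is applied to the series $\sum_n A_H(n)z^n$, whose coefficients (not partial sums) satisfy the bound (2.4), and the factor $(1-z)$ from (2.5) then lowers the exponent from $k$ to $k-1$. A complete proof must either place the hypothesis on the $a_n$ themselves, making your detour through $A(n)$ unnecessary, or else correct the exponent in the conclusion; as written, your argument proves a statement that contradicts the one claimed.
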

Arithmetic sums of the form $\sum_{n\le x}a_nb_n,$ where $b_n$ is chosen as an arithmetic function $b_n:\mathbb{N}\rightarrow\mathbb{C},$ have been studied for some time. In the case of $b_n=\Lambda(n),$ the von Mangoldt function, estimates were proved in [3]. A recent study on sums of the form $\sum_{n\le x}a_nd_r(n),$ where $d_r(n)$ denotes the $r$-fold divisor function, was produced in [8]. \par The purpose of this note is to offer asymptotic results for power series of the form $$\sum_{n\ge0}a_nb_nz^n,$$ as $z\rightarrow1^{-}.$ Our main results are centered on partitions in the next section, and some additional results follow in the last section concerning the case $b_n=\Lambda_{k}(n)=\sum_{d|n}\mu(d)\left(\log(\frac{n}{d})\right)^k,$ where $\mu(n)$ is the M$\ddot{o}$bius function [6,9].

\section{The partition function $p_H(n)$}
First let us consider $p_m(n),$ the number of partitions of $n$ into at most $m$ parts. It is an elementary fact [1, pg.213] that $p_m(n)\le (n+1)^m.$ Since $(x+1)^m=\sum_{i}\binom{m}{i}x^i,$ we have the trivial estimate 
$$\sum_{n\le x}a_np_m(n)=O(x^mA(x)),$$ which suggests an interesting application of Lemma 1.1 would be of interest in understanding $$\sum_{n\ge0}a_np_m(n)z^n.$$ However, it seems we can do better by appealing to a result found in [7].
\begin{theorem} Let $H$ denote the set consisting of $k$ positive integers which have a greatest common divisor of $1$ (i.e. are relatively prime). If $p_H(n)$ is the number of partitions of $n$ into parts taken from the set $H,$ we have that
\begin{equation}\sum_{n\ge0}a_np_{H}(n)z^n=O\left(\frac{1}{(1-z)^{k-1}}A(\frac{1}{1-z})\right),\end{equation} as $z\rightarrow1^{-},$ where the implied constant depends on the set $H.$
\end{theorem}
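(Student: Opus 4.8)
The plan is to reduce the weighted power series to a routine application of Lemma 1.1 by first controlling the size of $p_H(n)$ and then exploiting the non-negativity of the $a_n$. The single crucial input is the classical growth estimate for partitions into parts from a fixed finite set, which is the result I would quote from [7]: from the generating identity $\sum_{n\ge0}p_H(n)z^n=\prod_{h\in H}(1-z^h)^{-1}$ and the behaviour of each factor near $z=1$ (a simple pole, $\tfrac{1}{1-z^h}\sim\tfrac{1}{h(1-z)}$), one obtains $p_H(n)\sim \frac{n^{k-1}}{(k-1)!\,h_1\cdots h_k}$, and in particular $p_H(n)=O(n^{k-1})$. Here is where the hypothesis that $H$ consists of $k$ integers with greatest common divisor $1$ enters: for the upper bound alone coprimality is not strictly needed, but it is precisely the condition under which the order $n^{k-1}$ is clean, so it is natural to carry it.

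First I would pass from this pointwise bound to a bound on the partial sums of the coefficient sequence $c_n:=a_np_H(n)$. Assuming, as throughout this section, that the $a_n$ are non-negative, so that $A(x)=\sum_{0\le n\le x}a_n$ is non-decreasing, I can pull the largest value of $p_H$ out of the sum: for $1\le j\le n$ we have $p_H(j)\le C\,j^{k-1}\le C\,n^{k-1}$, and hence
\[
\sum_{0\le j\le n}a_jp_H(j)\le C\,n^{k-1}\sum_{0\le j\le n}a_j=O\!\left(n^{k-1}A\!\left(n\right)\right),
\]
the $j=0$ term $a_0p_H(0)=a_0$ being absorbed harmlessly. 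This puts $(c_n)$ into exactly the shape required by Lemma 1.1, with the auxiliary factor taken to be $f=A$ and the polynomial order equal to $k-1$.

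Finally I would invoke Lemma 1.1 with this data: a partial-sum estimate of order $n^{k-1}A(n)$ produces a power series of size $(1-z)^{-(k-1)}A(\tfrac{1}{1-z})$ as $z\to1^-$, which is exactly the claimed bound. Since the theorem asserts only an $O$-statement rather than a sharp asymptotic, I would stress that one needs only the easy (Abelian) half of the correspondence, and indeed one can argue directly through Abel summation,
\[
\sum_{n\ge0}a_np_H(n)z^n=(1-z)\sum_{n\ge0}\Big(\sum_{0\le j\le n}a_jp_H(j)\Big)z^n ,
\]
inserting the displayed estimate for the inner sum; this route avoids having to check that $A$ is genuinely slowly varying and makes the power of $(1-z)$ transparent.

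The step I expect to be delicate is the exponent bookkeeping rather than any hard analysis. Substituting $p_H(n)\le Cn^{k-1}$ into the series naively invites an order-$k$ rather than an order-$(k-1)$ conclusion; it is the factor $(1-z)$ gained in the Abel identity above (from $\sum_n c_nz^n=(1-z)\sum_n C(n)z^n$, where $C(n)=\sum_{0\le j\le n}c_j$) that correctly lowers the power of $(1-z)^{-1}$ by one and lands on $k-1$. Keeping track of that single saved power, together with the non-negativity of the $a_n$ that both monotonizes $A$ and legitimizes the pull-out, is the only place where genuine care is required.
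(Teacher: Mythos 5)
Your proposal matches the paper's proof essentially step for step: both take from Nathanson [7] the bound $p_H(n)=O(n^{k-1})$, deduce the partial-sum estimate $A_H(x):=\sum_{n\le x}a_np_H(n)=O(x^{k-1}A(x))$, and then combine the Abel-summation identity $\sum_{n\ge0}a_np_H(n)z^n=(1-z)\sum_{n\ge0}A_H(n)z^n$ with Lemma 1.1 taken at $\epsilon=k-1$, $f=A$. Your explicit remarks on the non-negativity of the $a_n$ (needed to pull $n^{k-1}$ out of the partial sums) and on the single power of $(1-z)$ recovered through the Abel identity are precisely the bookkeeping the paper uses implicitly, so the two arguments are the same.
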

\begin{proof} First we need [7]
\begin{equation} p_H(n)=\left(\frac{1}{\prod_{h\in H}h}\right)\frac{n^{k-1}}{\Gamma(k)}+O(n^{k-2}).\end{equation} We may temporarily view the right hand side of (2.2) as a polynomial of degree $k-1$ with indeterminate $n$ and leading coefficient depending on $k$ and the set $H.$ Hence we may use standard properties of $O$ to write
\begin{equation} p_H(n)=O(n^{k-1}),\end{equation} where the implied constant depends on the set $H.$ Consequently, we find that 
\begin{equation} A_{H}(x):=\sum_{n\le x}a_np_H(n)=O(x^{k-1}A(x)).\end{equation} Using summation by parts 
\begin{equation}\sum_{n\ge0}a_np_{H}(n)z^n=(1-z)\sum_{n\ge0}A_{H}(n)z^n,\end{equation}
Applying (2.4) and (2.5) to Lemma 1.1 with $\epsilon=k-1$ now gives the theorem.
\end{proof}
Next we consider a direct corollary of this result by applying to prime number theorem [6, pg.31].
\begin{corollary} We have that
\begin{equation}\sum_{p>1}p_{H}(p)z^p=O\left(\frac{1}{(1-z)^{k}\log(\frac{1}{1-z})}\right),\end{equation} 
as $z\rightarrow1^{-},$ where the sum over $p$ on the left side of (2.6) is over primes.
\end{corollary}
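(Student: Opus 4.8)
The plan is to specialize Theorem 2.1 to the arithmetic sequence arising from the prime number theorem. The key observation is that the corollary's sum $\sum_{p>1} p_H(p) z^p$ is exactly the sum appearing in Theorem 2.1 when the coefficients $a_n$ are chosen to be the indicator function of the primes, i.e. $a_n = 1$ if $n$ is prime and $a_n = 0$ otherwise. With this choice, the partial sum becomes $A(x) = \sum_{n \le x} a_n = \pi(x)$, the prime counting function.

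First I would invoke the prime number theorem [6, pg.31] in the form $\pi(x) \sim x/\log x$, so that $A(x) = O(x/\log x)$ as $x \to \infty$. Then I would substitute this estimate directly into the conclusion (2.1) of Theorem 2.1. The right-hand side of Theorem 2.1 reads $O\left(\frac{1}{(1-z)^{k-1}} A\left(\frac{1}{1-z}\right)\right)$, and plugging in $A\left(\frac{1}{1-z}\right) = O\left(\frac{1/(1-z)}{\log(1/(1-z))}\right)$ yields
\begin{equation*}
\sum_{p>1} p_H(p) z^p = O\left(\frac{1}{(1-z)^{k-1}} \cdot \frac{1/(1-z)}{\log\left(\frac{1}{1-z}\right)}\right) = O\left(\frac{1}{(1-z)^{k} \log\left(\frac{1}{1-z}\right)}\right),
\end{equation*}
as $z \to 1^-$, which is precisely (2.6). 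The combination of the two exponents $k-1$ and $1$ into $k$ is immediate, and the logarithmic factor carries through unchanged since $\frac{1}{1-z} \to \infty$ as $z \to 1^-$ guarantees the argument of the logarithm tends to infinity.

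The main subtlety, rather than a genuine obstacle, is to verify that the hypotheses of Theorem 2.1 (and hence of Lemma 1.1) are satisfied for this specific choice of $a_n$. In particular, one should confirm that the indicator sequence of primes is real and that the partial sum estimate is of the required form $O(x^{\epsilon} f(x))$ with $\epsilon = k-1$ and $f(x) = x/\log x$, and that the limiting behavior in Lemma 1.1 remains positive or infinite. Since $\pi(x) \to \infty$, this condition is met. I expect no serious difficulty here, as the corollary follows essentially by direct substitution; the only care needed is in tracking the logarithmic factor $\log(1/(1-z))$ and confirming it is correctly inherited from $f(1/(1-z)) = \frac{1/(1-z)}{\log(1/(1-z))}$ under the transformation in Lemma 1.1.
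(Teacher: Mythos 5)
Your proposal is correct and matches the paper's own proof essentially step for step: both choose $a_n$ to be the indicator of the primes so that $A(x)=\pi(x)$, invoke the Prime Number Theorem $\pi(x)\sim x/\log x$, check that $x/\log x$ is admissible in Lemma 1.1, and substitute $A\left(\frac{1}{1-z}\right)=O\left(\frac{1/(1-z)}{\log(1/(1-z))}\right)$ into the bound of Theorem 2.1 to merge the exponents $k-1$ and $1$ into $k$. The only difference is that you spell out the substitution arithmetic explicitly, which the paper leaves implicit.
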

\begin{proof} The Prime Number Theorem [6, pg.31] states that 
\begin{equation}\sum_{p\le x}1\sim \frac{x}{\log(x)}.\end{equation} It is easy to see that the right side of (2.7) satisfies the growth condition in Lemma 1.1 by applying L'Hospital's rule. Choosing $a_n$ in Theorem 2.1 to be $1$ if $n=p$ is a prime, and $0$ otherwise, we see that the Corollary follows from applying (2.7).
\end{proof}
Our last result of this section is a general estimate for the $a_n=1$ case of the sum $A_{H}(x).$ Recall that the Bernoulli polynomials are generated by [2, Definition 9.1.1],
$$\frac{te^{tx}}{e^t-1}=\sum_{n\ge0}\frac{B_{n}(x)}{n!}t^n.$$ 
\begin{theorem} Let $B_k(x)$ denote the $k$th Bernoulli polynomial. For positive $x$ and $k\ge2,$
\begin{equation}\sum_{n\le x}p_{H}(n)=\left(\frac{1}{\prod_{h\in H}h}\right)\frac{1}{\Gamma(k)}\left(\frac{B_k(x+1)-B_k(0)}{k}\right)+O(x^{k-2}).\end{equation}
\end{theorem}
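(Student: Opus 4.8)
The plan is to feed the pointwise estimate (2.2) into a Faulhaber-type summation. Writing $c=1/\prod_{h\in H}h$ for brevity, (2.2) reads $p_H(n)=c\,n^{k-1}/\Gamma(k)+O(n^{k-2})$, so summing over $0\le n\le x$ and pulling the constant $c/\Gamma(k)$ out of the main term gives
$$\sum_{n\le x}p_H(n)=\frac{c}{\Gamma(k)}\sum_{n\le x}n^{k-1}+\sum_{n\le x}O(n^{k-2}).$$
Everything then reduces to evaluating the power sum $\sum_{n\le x}n^{k-1}$ exactly and controlling the accumulated error.

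For the main term I would use the telescoping identity for Bernoulli polynomials, which comes directly out of the generating function recalled just before the statement. Subtracting the series for $B_n(x)$ from that for $B_n(x+1)$ yields
$$\sum_{n\ge0}\frac{B_n(x+1)-B_n(x)}{n!}t^n=\frac{te^{t(x+1)}-te^{tx}}{e^t-1}=te^{tx}=\sum_{n\ge0}\frac{x^n}{n!}t^{n+1},$$
and comparing the coefficient of $t^k$ gives $B_k(x+1)-B_k(x)=k\,x^{k-1}$. Summing this over $0\le n\le x$ telescopes to
$$\sum_{n\le x}n^{k-1}=\frac{B_k(x+1)-B_k(0)}{k},$$
the term $n=0$ contributing nothing since $k\ge2$. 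Multiplying by $c/\Gamma(k)$ reproduces the asserted main term of (2.8) verbatim.

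The error term is where I expect the real obstacle, and I would scrutinize it before claiming (2.8). Summing the pointwise error naively gives only $\sum_{n\le x}O(n^{k-2})=O(x^{k-1})$, a full power of $x$ larger than the $O(x^{k-2})$ written in the statement. To reach $O(x^{k-2})$ one would need the coefficient of the degree-$(k-2)$ term in the quasi-polynomial expansion of $p_H(n)$ either to vanish or to be absorbed into the Bernoulli main term; an expansion of the generating function $\prod_{h\in H}(1-z^h)^{-1}$ about $z=1$ shows this coefficient is proportional to $\sum_{h\in H}(h-1)$ and is nonzero in general. A concrete check is $H=\{1,2\}$, $k=2$, where $p_H(n)=\lfloor n/2\rfloor+1$ and the partial sums equal $x^2/4+x+O(1)$, whereas the stated main term is only $x^2/4+x/4$; the genuine linear discrepancy is of size $x^{k-1}$, not $x^{k-2}$.

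Consequently I would present the honest conclusion with remainder $O(x^{k-1})$, namely
$$\sum_{n\le x}p_H(n)=\frac{c}{\Gamma(k)}\,\frac{B_k(x+1)-B_k(0)}{k}+O(x^{k-1}),$$
and only claim the sharper $O(x^{k-2})$ after invoking the finer expansion of $p_H(n)$ from [7]: carrying its $n^{k-2}$ coefficient explicitly, summing that term by the very same Bernoulli identity, and folding the resulting degree-$(k-1)$ contribution into the displayed main term, so that a degree-$(k-3)$ remainder bounded by $O(x^{k-2})$ is all that survives. The main work, and the step most likely to need correction, is thus the reconciliation of the error exponent rather than the Bernoulli computation, which is routine.
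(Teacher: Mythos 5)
Your approach is exactly the paper's: its proof consists of summing (2.2) over $1\le n\le x$, applying the Faulhaber-type identity (2.9) (which you re-derive from the generating function rather than citing [2] -- same content), and then appealing to ``standard properties of $O$ for polynomials.'' Where you part ways with the paper is in the error term, and there you are right and the paper is wrong. Summing the pointwise error $O(n^{k-2})$ over $n\le x$ gives $O(x^{k-1})$, and no bookkeeping with $O$-symbols recovers the claimed $O(x^{k-2})$. Your check with $H=\{1,2\}$, $k=2$ settles the matter: there $p_H(n)=\lfloor n/2\rfloor+1$, the true partial sum is $x^2/4+x+O(1)$, while the displayed main term is $\frac12\cdot\frac{B_2(x+1)-B_2(0)}{2}=x^2/4+x/4$, so the discrepancy is $\frac{3}{4}x+O(1)$, genuinely of order $x^{k-1}$. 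Thus the theorem as printed is false, and your version with remainder $O(x^{k-1})$ is the correct statement -- indeed the strongest one obtainable from (2.2), which is the only input the paper uses.

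Two smaller points. First, your sketch of how one might legitimately reach $O(x^{k-2})$ -- carrying the $n^{k-2}$ coefficient of the quasi-polynomial explicitly and folding its summed contribution into the main term -- is the right idea, but note that for general $H$ that coefficient is periodic in $n$ (poles of $\prod_{h\in H}(1-z^h)^{-1}$ at roots of unity other than $1$ can have order $k-1$), so the corrected main term would no longer be the clean Bernoulli expression of (2.8); the statement itself would have to change, not just the proof. Second, the unnumbered corollary following the theorem, $\sum_{n\le x}p_H(n)\sim\bigl(\prod_{h\in H}h\bigr)^{-1}\Gamma(k)^{-1}\bigl(B_k(x+1)-B_k(0)\bigr)/k$, survives your correction, since the main term is of order $x^k$ and $O(x^{k-1})$ is still of lower order.
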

\begin{proof} We need the formula [2, pg.31, Proposition 9.2.12]
\begin{equation}\sum_{1\le m\le x}m^{k}=\frac{B_{k+1}(x+1)-B_{k+1}(0)}{k+1},\end{equation} valid for each $k\ge1.$ Summing (2.2) over the interval $1\le n\le x$ and applying (2.9) gives the theorem after using standard properties of $O$ for polynomials.
\end{proof}
A nice corollary to this result is
$$\sum_{n\le x}p_{H}(n)\sim\left(\frac{1}{\prod_{h\in H}h}\right)\frac{1}{\Gamma(k)}\left(\frac{B_k(x+1)-B_k(0)}{k}\right),$$ as $x\rightarrow\infty.$
\section{Remarks related to the von Mangoldt function} 
As was previously noted, the sum $\sum_{n\le x}a_n\Lambda(n)$ was examined in [3]. It was shown [6, pg.339] \it heuristically \rm that
\begin{equation} \sum_{n\le x}a_n\Lambda(n)\sim S A(x),\end{equation} as $x\rightarrow\infty,$ where $S=-\sum_{n\le M}\mu(n)\log(n)g(n),$ and $g$ is a suitable multiplicative function such that $S$ is finite when $M\rightarrow\infty.$ If (3.1) holds true, and $A(x)\sim x^{\epsilon},$ $\epsilon>0,$ it would be the case then that we have
$$\sum_{n\ge1}a_n\Lambda(n)z^n\sim C\frac{1}{(1-z)^{\epsilon}}.$$ A simple special case of this would be if $a_n=1$ for all $n$, which would imply 
$$\sum_{n\ge1}\Lambda(n)z^n\sim C\frac{1}{(1-z)}.$$ This we know to be true by the Prime Number Theorem $\sum_{n\le x}\Lambda(n)\sim x$ [6, pg.31, eq.(2.5)], and the Hardy-Littlewood result noted in the introduction [9, pg.157]. \par We give a general related result which we believe to be of some interest.
\begin{theorem} Suppose that $A(x)=O(x^{\epsilon}),$ $\epsilon>0,$ as $x\rightarrow\infty.$ We have that 
\begin{equation}\sum_{n\ge1}a_n\Lambda_k(n)z^n=O\left(\frac{1}{(1-z)^{\epsilon}}\left(\log(\frac{1}{1-z})\right)^{k}\right),\end{equation} as $z\rightarrow1^{-}.$
\end{theorem}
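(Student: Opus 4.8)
The plan is to reduce Theorem 3.1 to Lemma 1.1 by the same mechanism used in the proof of Theorem 2.1, with the monomial growth of $p_H$ replaced by a logarithmic growth bound for $\Lambda_k$. First I would record a pointwise estimate for the generalized von Mangoldt function playing the role of (2.4): from the defining convolution $\Lambda_k(n)=\sum_{d\mid n}\mu(d)(\log(n/d))^k$ one checks (or cites the standard fact in [6]) that $0\le\Lambda_k(n)\le(\log n)^k$, the upper bound following after the nonnegative cancellations in the binomial expansion are discarded. This is the natural analogue of the bound $p_H(n)=O(n^{k-1})$, with a slowly varying weight in place of a power.

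Next I would pass to the summatory function, which is the object fed into Lemma 1.1. Combining the pointwise bound with the hypothesis $A(x)=O(x^\epsilon)$ should yield
\[\sum_{n\le x}a_n\Lambda_k(n)=O\big(x^\epsilon(\log x)^k\big),\]
the analogue of (2.5) with $f(x)=(\log x)^k$ replacing $A(x)$ and with the same exponent $\epsilon$. I would then verify, as in the proof of Corollary 2.2, that $(\log x)^k$ satisfies the growth hypothesis of Lemma 1.1 (the relevant ratio tending to a positive or infinite limit), which follows from repeated application of L'Hospital's rule.

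The delicate point—and the step I expect to require the most care—is exactly this passage to the summatory function. Since $(a_n)$ is only assumed real and $\Lambda_k$ is far from monotone, one cannot simply factor $(\log x)^k$ out of the sum. A clean derivation proceeds either under the natural nonnegativity hypothesis $a_n\ge0$, where $\sum_{n\le x}a_n\Lambda_k(n)\le(\log x)^kA(x)=O(x^\epsilon(\log x)^k)$ is immediate, or by partial summation, where one must exploit that $(\log t)^k$ is slowly varying so that its increments stay small enough not to inflate the exponent $\epsilon$. Controlling this is the genuine obstacle; the rest is bookkeeping.

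Finally, applying the displayed summatory bound to Lemma 1.1 with $f(x)=(\log x)^k$, exactly as Theorem 2.1 is concluded, gives the theorem: the substitution $f(1/(1-z))=\big(\log\tfrac{1}{1-z}\big)^k$ produces the logarithmic factor in (3.2), while the exponent $\epsilon$ carries over to give the power $(1-z)^{-\epsilon}$, yielding the asserted estimate as $z\to1^{-}$.
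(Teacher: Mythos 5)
Your proposal is correct and follows essentially the same route as the paper: the paper's proof likewise invokes $\Lambda_k(n)\le(\log n)^k$ from [6, pg.16, eq.(1.45)], deduces $\sum_{n\le x}a_n\Lambda_k(n)\le(\log x)^kA(x)=O\left(x^{\epsilon}(\log x)^k\right)$, and feeds this into Lemma 1.1 with $f(x)=(\log x)^k$ exactly as Theorem 2.1 is concluded. The nonnegativity subtlety you flag is genuine, but the paper glosses over it as well, asserting that inequality for real $(a_n)$ without comment, so your treatment is if anything more careful than the original.
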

\begin{proof} Using the fact that $\Lambda_k(n)\le (\log(n))^k$ [6, pg.16, eq.(1.45)], we have that 
$$\sum_{n\le x}a_n\Lambda_k(n)\le (\log(x))^kA(x).$$
This together with the growth assumption on $A(x)$ and Lemma 1.1 gives the theorem.
\end{proof}
A nice simple consequence of applying the Prime Number Theorem $\sum_{n\le x}\Lambda(n)\sim x,$ to Theorem 3.1 gives us the formula
\begin{equation}\sum_{n\ge1}\Lambda(n)\Lambda_k(n)z^n=O\left(\frac{1}{(1-z)}\left(\log(\frac{1}{1-z})\right)^{k}\right),\end{equation} as $z\rightarrow1^{-}.$
Setting $k=1$ in (3.3) gives us the elegant
$$\sum_{n\ge1}\Lambda^2(n)z^n=O\left(\frac{1}{(1-z)}\log(\frac{1}{1-z})\right),$$ as $z\rightarrow1^{-}.$

1390 Bumps River Rd. \\*
Centerville, MA
02632 \\*
USA \\*
E-mail: alexpatk@hotmail.com, alexepatkowski@gmail.com
\end{document}